\newtheorem{theorem}{Theorem}[section]
\newtheorem{lemma}[theorem]{Lemma}
\newtheorem{corollary}[theorem]{Corollary}
\theoremstyle{definition}
\theoremstyle{remark}
\numberwithin{equation}{section}
\begin{document}
\setcounter{page}{1}

\title[Bleimann-Butzer-Hahn operators
defined by $(p,q)$-integers]{Some approximation results on Bleimann-Butzer-Hahn operators
defined by $(p,q)$-integers}

\author[M. Mursaleen, Md. Nasiruzzaman, Asif Khan, and Khursheed J. Ansari]{M. Mursaleen$^1$, Md. Nasiruzzaman$^1$, Asif Khan$^2$ and Khursheed J. Ansari$^2$$^{*}$}

\address{$^{1}$ Department of Mathematics, Aligarh Muslim University, Aligarh--202002, India.}
\email{\textcolor[rgb]{0.00,0.00,0.84}{mursaleenm@gmail.com;
nasir3489@gmail.com}}

\address{$^{2}$ Department of Pure Mathematics, Aligarh Muslim University, Aligarh--202002, India.}
\email{\textcolor[rgb]{0.00,0.00,0.84}{asifjnu07@gmail.com; ansari.jkhursheed@gmail.com}}


\subjclass[2010]{Primary 41A10; Secondary 441A25, 41A36.}

\keywords{$(p,q)$-integers; $(p,q)$%
-Bernstein operators; $(p,q)$-Bleimann-Butzer-Hahn operators; $q$%
-Bleimann-Butzer-Hahn operators; modulus of continuity.}

\date{Received: xxxxxx; Revised: yyyyyy; Accepted: zzzzzz.
\newline \indent $^{*}$ Corresponding author}

\begin{abstract}
In this paper, we introduce a generalization
of the Bleimann-Butzer-Hahn operators based on $(p,q)$-integers and obtain
Korovkin's type approximation theorem for these operators. Furthermore, we
compute convergence of these operators by using the modulus of continuity.
\end{abstract} \maketitle

\section{Introduction and preliminaries}
Bleimann, Butzer and Hahn (BBH) introduced the following operators in \cite%
{brns} as follows;
\begin{align}\label{nas1}
L_n (f;x)= \frac{1}{(1+x)^n}\sum_{k=0}^n f\left(\frac{k}{n-k+1} \right)\left[
\begin{array}{c}
n \\
k%
\end{array}%
\right] x^k, x\geq 0
\end{align}
\parindent=8mmIn approximation theory, $q$-type generalization of Bernstein
polynomials was introduced by Lupa‏ \cite{lups}. In 1997, Phillips \cite%
{philip} introduced another modification of Bernstein polynomials. Also he
obtained the rate of convergence and the Voronovskaja's type asymptotic
expansion for these polynomials.

The BBH-type operators based on $q$-integers are defined as follows
\begin{align}\label{nas2}
L_{n}^{q}(f;x)=\frac{1}{\ell _{n}(x)}\sum_{k=0}^{n}f\left( \frac{[k]_{q}}{%
[n-k+1]_{q}q^{k}}\right) q^{\frac{k(k-1)}{2}}\left[
\begin{array}{c}
n \\
k%
\end{array}%
\right] _{q}x^{k}
\end{align}
where $\ell _{n}(x)=\prod_{k=0}^{n-1}(1+q^{s}x)$.\newline
Recently, Mursaleen et al \cite{mur7} applied $(p,q)$-calculus in
approximation theory and introduced first $(p,q)$-analogue of Bernstein
operators. They also introduced and studied approximation properties of $%
(p,q)$-analogue of Bernstein-Stancu operators in \cite{mur8}.\newline

Let us recall certain notations on $(p,q)$-calculus.

The $(p,q)$ integers $[n]_{p,q}$ are defined by
\begin{equation*}
[n]_{p,q}=\frac{p^n-q^n}{p-q},~~~~~~~n=0,1,2,\cdots, ~~0<q<p\leq 1.
\end{equation*}
whereas $q$-integers are given by
\begin{equation*}
[n]_{q}=\frac{1-q^n}{1-q},~~~~~~~n=0,1,2,\cdots, ~~0<q< 1.
\end{equation*}%
\newline
It is very clear that $q$-integers and $(p,q)$-integers are different, that
is we cannot obtain $(p,q)$ integers just by replacing $q$ by $\frac{q}{p}$
in the definition of $q$-integers but if we put $p = 1$ in definition of $%
(p,q)$ integers then $q$-integers becomes a particular case of $(p,q)$
integers. Thus we can say that $(p,q)$-calculus can be taken as a
generalization of $q$-calculus.\newline

Now by some simple calculation and induction on $n,$ we have $(p,q)$%
-binomial expansion as follows
\begin{equation*}
(ax+by)_{p,q}^{n}:=\sum\limits_{k=0}^{n}p^{\frac{(n-k)(n-k-1)}{2}}q^{\frac{%
k(k-1)}{2}} \left[
\begin{array}{c}
n \\
k%
\end{array}%
\right] _{p,q}a^{n-k}b^{k}x^{n-k}y^{k},
\end{equation*}
\begin{equation*}
(x+y)_{p,q}^{n}=(x+y)(px+qy)(p^2x+q^2y)\cdots (p^{n-1}x+q^{n-1}y),
\end{equation*}
\begin{equation*}
(1-x)_{p,q}^{n}=(1-x)(p-qx)(p^2-q^2x)\cdots (p^{n-1}-q^{n-1}x)
\end{equation*}%
\newline
and the $(p,q)$-binomial coefficients are defined by
\begin{equation*}
\left[
\begin{array}{c}
n \\
k%
\end{array}%
\right] _{p,q}=\frac{[n]_{p,q}!}{[k]_{p,q}![n-k]_{p,q}!}.
\end{equation*}

Again it can be easily verified that $(p,q)$-binomial expansion is different
from $q$-binomial expansion and is not a replacement of $q$ by $\frac{q}{p}$.%
\newline
By some simple calculation, we have the following relation

\begin{equation*}
q^k[n-k+1]_{p,q}=[n+1]_{p,q}-p^{n-k+1}[k]_{p,q}.
\end{equation*}

For details on $q$-calculus and $(p,q)$-calculus, one can refer \cite{vp},
\cite{mah,sad,vivek}, respectively.\newline

Now based on $(p,q)$-integers, we construct $(p,q)$-analogue of
BBH-operators, and we call it as $(p,q)$-Bleimann-Butzer-Hahn-Operators and
investigate its Korovokin's-type approximation properties, by using the test
functions $\left(\frac{t}{1+t}\right)^\nu$ for $\nu=0,1,2$. Also for a space
of generalized Lipschitz-type maximal functions we give a pointwise
estimation.

Let $C_{B}(\mathbb{R}_+)$ be the set of all bounded and continuous functions
on $\mathbb{R}_+$, then $C_{B}(\mathbb{R}_+)$ is linear normed space with
\begin{equation*}
\parallel f \parallel_{C_{B}}= \sup_{x \geq 0} \mid f(x) \mid.
\end{equation*}
Let $\omega$ denotes modulus of continuity satisfying the following
condition:
\begin{enumerate}
\item $\omega$ is a non-negative increasing function on $\mathbb{R}%
_+$

\item $\omega(\delta_1+\delta_2)\leq
\omega(\delta_1)+\omega(\delta_2)$

\item $\lim_{\delta \to 0}\omega(\delta)=0$.
\end{enumerate}
Let ${H}_\omega$ be the space of all real-valued functions $f$ defined on
the semiaxis $\mathbb{R}_+$ satisfying the condition
\begin{equation*}
\mid f(x) - f(y) \mid \leq \omega \left(\bigg{|} \frac{x}{1+x}- \frac{y}{1+y}
\bigg{|} \right),
\end{equation*}
for any $x,y \in \mathbb{R}_+$.
\begin{theorem}\label{main}\protect\cite{butz}
Let $\{A_n\}$ be the sequence of positive linear operators from $H_\omega$
into $C_B(\mathbb{R}_+)$, satisfying the conditions
\begin{equation*}
\lim_{n \to \infty} \parallel A_n \left( \left( \frac{t}{1+t}%
\right)^\nu;x\right)-\left(\frac{x}{1+x}\right)^\nu \parallel_{C_{B}},
\end{equation*}
for $\nu=0,1,2$. Then for any function $f \in H_\omega$
\begin{equation*}
\lim_{n \to \infty} \parallel A_n (f)-f \parallel_{C_{B}}=0.
\end{equation*}
\end{theorem}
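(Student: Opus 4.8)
The plan is to run the classical Korovkin scheme after passing to the ``chord'' variable $u=\frac{t}{1+t}$, which maps $\mathbb{R}_+$ bijectively onto $[0,1)$; writing $e_\nu(t)=\big(\frac{t}{1+t}\big)^\nu$ for $\nu=0,1,2$, the three hypotheses say precisely that $A_n(e_\nu;\cdot)$ converges to $\big(\frac{x}{1+x}\big)^\nu$ in $C_B(\mathbb{R}_+)$. The algebraic engine is the identity, valid for $t,x\ge0$,
\begin{equation*}
\psi_x(t):=\left(\frac{t}{1+t}-\frac{x}{1+x}\right)^2=e_2(t)-2\,\frac{x}{1+x}\,e_1(t)+\Big(\frac{x}{1+x}\Big)^2 e_0(t).
\end{equation*}
Applying the positive linear operator $A_n$ in the variable $t$ and using $\psi_x(x)=0$,
\begin{equation*}
A_n(\psi_x;x)=\Big[A_n(e_2;x)-\big(\tfrac{x}{1+x}\big)^2\Big]-2\,\tfrac{x}{1+x}\Big[A_n(e_1;x)-\tfrac{x}{1+x}\Big]+\big(\tfrac{x}{1+x}\big)^2\Big[A_n(e_0;x)-1\Big].
\end{equation*}
Since $0\le\frac{x}{1+x}<1$, the coefficients $1$, $2\frac{x}{1+x}$, $\big(\frac{x}{1+x}\big)^2$ are bounded independently of $x$, so the three hypotheses yield $\displaystyle\lim_{n\to\infty}\sup_{x\ge0}A_n(\psi_x;x)=0$; the case $\nu=0$ also gives a constant $C$ with $\sup_{x\ge0}A_n(e_0;x)\le C$ for all $n$.

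Next I would note that $f\in H_\omega$ is automatically bounded: taking $y=0$ in the defining inequality and using monotonicity of $\omega$ gives $|f(x)|\le|f(0)|+\omega(1)$ for all $x\ge0$, so $f\in C_B(\mathbb{R}_+)$. The point of entry is the classical estimate obtained from subadditivity and monotonicity of $\omega$: for every $\delta>0$ and all $t,x\ge0$,
\begin{equation*}
|f(t)-f(x)|\le\omega\!\left(\bigg|\frac{t}{1+t}-\frac{x}{1+x}\bigg|\right)\le\left(1+\frac{1}{\delta^{2}}\,\psi_x(t)\right)\omega(\delta).
\end{equation*}
Applying $A_n$ to this two-sided bound in the variable $t$ (by linearity and positivity) and inserting the term $f(x)A_n(e_0;x)$ via the triangle inequality, I obtain for every $x\ge0$
\begin{equation*}
|A_n(f;x)-f(x)|\le\|f\|_{C_{B}}\,\big|A_n(e_0;x)-1\big|+\omega(\delta)\,A_n(e_0;x)+\frac{\omega(\delta)}{\delta^{2}}\,A_n(\psi_x;x).
\end{equation*}

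Finally I would take $\sup_{x\ge0}$ and then let $n\to\infty$. Given $\varepsilon>0$, first fix $\delta>0$ so small that $C\,\omega(\delta)<\varepsilon/3$, possible since $\lim_{\delta\to0}\omega(\delta)=0$; this controls the middle term uniformly in $n$. Then choose $N$ so that for $n\ge N$ both $\|f\|_{C_{B}}\sup_{x\ge0}|A_n(e_0;x)-1|<\varepsilon/3$ (case $\nu=0$) and $\delta^{-2}\sup_{x\ge0}A_n(\psi_x;x)<\varepsilon/(3\,\omega(\delta))$ (first paragraph). Adding the three estimates gives $\|A_n(f)-f\|_{C_{B}}<\varepsilon$ for all $n\ge N$, which is the assertion. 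The one point I would handle with care is that every estimate must be uniform in $x$ over the unbounded half-line; this is exactly what the substitution $u=\frac{t}{1+t}$ secures, by confining $\frac{x}{1+x}$ to $[0,1)$ and thereby bounding the coefficients in the expansion of $A_n(\psi_x;x)$ independently of $x$. Everything else is the textbook Korovkin argument.
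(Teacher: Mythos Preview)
Your argument is a clean and correct Korovkin-type proof, but there is nothing in the paper to compare it with: this theorem is not proved in the paper at all. It is quoted, with attribution to Gadjiev and \c{C}akar, as a known result from the literature, and the authors only \emph{use} it (to prove their Theorem on the convergence of $L_n^{p_n,q_n}$) rather than establish it.

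For the record, your reconstruction follows the expected route for such a statement: introduce the test functions $e_\nu(t)=\big(\tfrac{t}{1+t}\big)^\nu$, expand the second central moment $\psi_x(t)=\big(\tfrac{t}{1+t}-\tfrac{x}{1+x}\big)^{2}$ in the $e_\nu$, deduce $\sup_{x\ge 0} A_n(\psi_x;x)\to 0$ from the three hypotheses, and then invoke the standard bound $\omega(u)\le(1+u^{2}/\delta^{2})\,\omega(\delta)$ coming from subadditivity and monotonicity of $\omega$. The uniformity in $x$ over the unbounded half-line is secured exactly as you say, because $\tfrac{x}{1+x}\in[0,1)$ keeps every coefficient bounded. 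One cosmetic point: in your final $\varepsilon/3$ step you divide by $\omega(\delta)$, implicitly assuming $\omega(\delta)>0$; it is tidier simply to note that $\omega(\delta)/\delta^{2}$ is a fixed constant once $\delta$ is chosen, so the third term tends to $0$ directly with $\sup_{x\ge 0} A_n(\psi_x;x)$.
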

We define $(p,q)$-Bleimann-Butzer and Hahn-type operators based on $(p,q)$%
-integers as follows:
\begin{align}\label{nas3}
L_n^{p,q}(f;x)=\frac{1}{\ell_n^{p,q}(x)}\sum_{k=0}^n f \left( \frac{
p^{n-k+1}[k]_{p,q}}{[n-k+1]_{p,q}q^k }\right) p^{\frac{(n-k)(n-k-1)}{2}}q^{%
\frac{k(k-1)}{2}} \left[
\begin{array}{c}
n \\
k%
\end{array}%
\right] _{p,q} x^k
\end{align}
where, ~~$x \geq 0,~~0< q<p\leq 1$
\begin{equation*}
\ell_n^{p,q}(x)= \prod_{s=0}^{n-1}(p^s+q^s x)
\end{equation*}
and $f$ is defined on semiaxis $\mathbb{R}_+$.\newline
And also by induction, we construct the Euler identity based on $(p,q)$%
-analogue defined as follows:
\begin{align}\label{nas4}
\prod_{s=0}^{n-1}(p^s+q^s x) =\sum_{k=0}^n p^{\frac{(n-k)(n-k-1)}{2}}q^{%
\frac{k(k-1)}{2}} \left[
\begin{array}{c}
n \\
k%
\end{array}%
\right] _{p,q} x^k
\end{align}

\parindent=8mmIf we put $p=1$, then we obtain $q$-BBH-operators.
If we take $f\left( \frac{[k]_{p,q}}{[n-k+1]_{p,q}}\right) $ instead of $%
f\left( \frac{p^{n-k+1}[k]_{p,q}}{[n-k+1]_{p,q}q^{k}}\right) $ in \eqref{nas3},
then we obtain usual generalization of Bleimann, Butzer and Hahn operators
based on $(p,q)$-integers, then in this case it is impossible to obtain
explicit expressions for the monomials $t^{\nu }$ and $\left( \frac{t}{1+t}%
\right) ^{\nu }$ for $\nu =1,2$. But if we define the Bleimann, Butzer and
Hahn operators as in \eqref{nas3}, then we can obtain explicit formulas for the
monomials $\left( \frac{t}{1+t}\right) ^{\nu }$ for $\nu =0,1,2$. We
emphasize that these operators are more flexible than the classical
BBH-operators and $q$-analogue of BBH-operators. That is depending on the
selection of $(p,q)$-integers, the rate of convergence of $(p,q)$%
-BBH-operators is atleast as good as the classical one.

\section{Main results}

\begin{lemma}\label{main1}
Let $L_n^{p,q}(f;x)$ be given by \eqref{nas3}, then for any $x \geq 0$ and $%
0<q<p\leq 1$ we have the following identities

\begin{enumerate}
\item $L_n^{p,q}(1;x)=1,$

\item\label{main2} $L_n^{p,q}(\frac{t}{1+t};x)=\frac{p[n]_{p,q}}{[n+1]_{p,q}}%
\left(\frac{x}{1+x}\right),$

\item \label{main4}$L_n^{p,q}\left((\frac{t}{1+t})^2;x\right)=\frac{%
pq^2[n]_{p,q}[n-1]_{p,q}}{[n+1]_{p,q}^2}\frac{x^2}{(1+x)(p+qx)} +\frac{%
p^{n+1}[n]_{p,q}}{[n+1]_{p,q}^2}\left(\frac{x}{1+x}\right).$
\end{enumerate}
\end{lemma}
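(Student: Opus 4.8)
The plan is to evaluate all three quantities directly from the definition \eqref{nas3}, the Euler-type identity \eqref{nas4}, and the relation $q^{k}[n-k+1]_{p,q}=[n+1]_{p,q}-p^{n-k+1}[k]_{p,q}$. Part (1) is immediate: putting $f\equiv 1$ in \eqref{nas3} leaves exactly the sum on the right of \eqref{nas4}, which is $\ell_n^{p,q}(x)$, so $L_n^{p,q}(1;x)=1$.

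For (2) and (3) the decisive preliminary step is to simplify the value of the argument at the node $t_{n,k}=\dfrac{p^{n-k+1}[k]_{p,q}}{[n-k+1]_{p,q}q^{k}}$: by the stated relation,
\[
\frac{t_{n,k}}{1+t_{n,k}}=\frac{p^{n-k+1}[k]_{p,q}}{q^{k}[n-k+1]_{p,q}+p^{n-k+1}[k]_{p,q}}=\frac{p^{n-k+1}[k]_{p,q}}{[n+1]_{p,q}},
\]
which is exactly the feature that makes $\bigl(t/(1+t)\bigr)^{\nu}$ summable in closed form. For (2) I substitute this (with $\nu=1$) into \eqref{nas3}, pull $[n+1]_{p,q}^{-1}$ out, discard the vanishing $k=0$ term, and replace $[k]_{p,q}\left[\begin{smallmatrix}n\\k\end{smallmatrix}\right]_{p,q}$ by $[n]_{p,q}\left[\begin{smallmatrix}n-1\\k-1\end{smallmatrix}\right]_{p,q}$. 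After the shift $j=k-1$ and collecting the surviving powers of $p$ and $q$, the remaining sum equals $xp^{n}$ times the right-hand side of \eqref{nas4} for $n-1$ evaluated at $z=qx/p$; since $\prod_{s=0}^{n-2}(p^{s}+q^{s}\cdot qx/p)=p^{-(n-1)}\prod_{s=1}^{n-1}(p^{s}+q^{s}x)$ and $\ell_n^{p,q}(x)=(1+x)\prod_{s=1}^{n-1}(p^{s}+q^{s}x)$, the tail product cancels, leaving $px/(1+x)$, and (2) follows.

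For (3) I square the node identity, so the summand acquires the factor $p^{2(n-k+1)}[k]_{p,q}^{2}/[n+1]_{p,q}^{2}$, and I linearize $[k]_{p,q}^{2}\left[\begin{smallmatrix}n\\k\end{smallmatrix}\right]_{p,q}$ in two steps: first write it as $[n]_{p,q}\,[k]_{p,q}\left[\begin{smallmatrix}n-1\\k-1\end{smallmatrix}\right]_{p,q}$, then use $[k]_{p,q}=p^{k-1}+q[k-1]_{p,q}$ together with $[k-1]_{p,q}\left[\begin{smallmatrix}n-1\\k-1\end{smallmatrix}\right]_{p,q}=[n-1]_{p,q}\left[\begin{smallmatrix}n-2\\k-2\end{smallmatrix}\right]_{p,q}$ to obtain $[n]_{p,q}$ times the sum of a term in $p^{k-1}\left[\begin{smallmatrix}n-1\\k-1\end{smallmatrix}\right]_{p,q}$ and a term in $q[n-1]_{p,q}\left[\begin{smallmatrix}n-2\\k-2\end{smallmatrix}\right]_{p,q}$. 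Shifting $j=k-1$ in the first and $j=k-2$ in the second, and invoking \eqref{nas4} for $n-1$ at $z=qx/p$ and for $n-2$ at $z=q^{2}x/p^{2}$, the first piece contributes $\dfrac{p^{n+1}[n]_{p,q}}{[n+1]_{p,q}^{2}}\dfrac{x}{1+x}$ after cancelling $\prod_{s=1}^{n-1}(p^{s}+q^{s}x)$ against $\ell_n^{p,q}(x)$, and the second contributes $\dfrac{pq^{2}[n]_{p,q}[n-1]_{p,q}}{[n+1]_{p,q}^{2}}\dfrac{x^{2}}{(1+x)(p+qx)}$ after cancelling $\prod_{s=2}^{n-1}(p^{s}+q^{s}x)$ against $\ell_n^{p,q}(x)=(1+x)(p+qx)\prod_{s=2}^{n-1}(p^{s}+q^{s}x)$; summing the two gives the stated formula.

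The one place requiring care is the exponent bookkeeping through the reindexings, together with noticing the normalization $\prod_{s=0}^{n-m-1}(p^{s}+q^{s}\cdot q^{m}x/p^{m})=p^{-m(n-m)}\prod_{s=m}^{n-1}(p^{s}+q^{s}x)$, which is what lets \eqref{nas4} for $n-m$ collapse against the appropriate tail of $\ell_n^{p,q}(x)$. Once that is set up, everything else is routine algebra and no analytic argument is needed.
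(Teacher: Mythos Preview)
Your proposal is correct and follows essentially the same line as the paper's proof: the same node simplification $t/(1+t)=p^{n-k+1}[k]_{p,q}/[n+1]_{p,q}$, the same binomial reductions $[k]_{p,q}\left[\begin{smallmatrix}n\\k\end{smallmatrix}\right]_{p,q}=[n]_{p,q}\left[\begin{smallmatrix}n-1\\k-1\end{smallmatrix}\right]_{p,q}$ and $[k]_{p,q}=p^{k-1}+q[k-1]_{p,q}$, and the same index shifts followed by an appeal to \eqref{nas4}. If anything you are more explicit than the paper about the product normalization $\prod_{s=0}^{n-m-1}(p^{s}+q^{s}\cdot q^{m}x/p^{m})=p^{-m(n-m)}\prod_{s=m}^{n-1}(p^{s}+q^{s}x)$ that makes the tail of $\ell_n^{p,q}(x)$ cancel in the final step.
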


\begin{proof}
\begin{enumerate}
\item $%
L_{n}^{p,q}\left( 1;x\right) =\frac{1}{\ell _{n}^{p,q}(x)}\sum_{k=1}^{n}%
 p^{\frac{(n-k)(n-k-1)}{2}}q^{\frac{k(k-1)}{2}}\left[
\begin{array}{c}
n \\
k%
\end{array}%
\right] _{p,q}x^{k}$ \\ \newline

For $0<q<p\leq 1$, we have
\begin{equation*}
\sum_{k=0}^{n}p^{\frac{(n-k)(n-k-1)}{2}}q^{\frac{k(k-1)}{2}}\left[
\begin{array}{c}
n \\
k%
\end{array}%
\right] _{p,q}x^{k}=\prod_{s=0}^{n-1}(p^{s}+q^{s}x)=\ell_{n}^{p,q}(x),
\end{equation*}
which completes the proof.\\ \newline

\item Let $t=\frac{p^{n-k+1}[k]_{p,q}}{[n-k+1]_{p,q}q^{k}}$, then $%
\frac{t}{t+1}=\frac{[k]_{p,q}p^{n+1-k}}{[n+1]_{p,q}}$
\begin{eqnarray*}
L_{n}^{p,q}\left( \frac{t}{1+t};x\right)  &=&\frac{1}{\ell _{n}^{p,q}(x)}%
\sum_{k=1}^{n}\frac{[k]_{p,q}p^{n-k+1}}{[n+1]_{p,q}}p^{\frac{(n-k)(n-k-1)}{2}%
}q^{\frac{k(k-1)}{2}}\left[
\begin{array}{c}
n \\
k%
\end{array}%
\right] _{p,q}x^{k} \\
&=&\frac{1}{\ell _{n}^{p,q}(x)}\sum_{k=1}^{n}\frac{[n]_{p,q}p^{n-k+1}}{%
[n+1]_{p,q}}p^{\frac{(n-k)(n-k-1)}{2}}q^{\frac{k(k-1)}{2}}\left[
\begin{array}{c}
n-1 \\
k-1%
\end{array}%
\right] _{p,q}x^{k} \\
&=&x\left( \frac{1}{\ell _{n}^{p,q}(x)}\cdot \frac{\lbrack n]_{p,q}}{%
[n+1]_{p,q}}p\right) \sum_{k=0}^{n-1}p^{\frac{(n-k)(n-k-1)}{2}}q^{\frac{%
k(k-1)}{2}}\left[
\begin{array}{c}
n-1 \\
k%
\end{array}%
\right] _{p,q}(qx)^{k} \\
&=&p\frac{[n]_{p,q}}{[n+1]_{p,q}}\frac{x}{1+x}.
\end{eqnarray*}

\item $L_n^{p,q}\left(\frac{t^2}{(1+t)^2};x\right) =\frac{1}{%
\ell_n^{p,q}(x)}\sum_{k=1}^n \frac{[k]_{p,q}^2p^{2(n-k+1)}}{[n+1]_{p,q}^2}
p^{\frac{(n-k)(n-k-1)}{2}}q^{\frac{k(k-1)}{2}} \left[
\begin{array}{c}
n \\
k%
\end{array}%
\right] _{p,q} x^k$.\newline

By some simple calculation, we have
\begin{equation*}
[k]_{p,q}=p^{k-1}+q[k-1]_{p,q},~~\mbox{and}~~
[k]_{p,q}^2=q[k]_{p,q}[k-1]_{p,q}+p^{k-1}[k]_{p,q},
\end{equation*}
using it we, get
\begin{equation*}
L_n^{p,q}\left(\frac{t^2}{(1+t)^2};x\right) =\frac{1}{\ell_n^{p,q}(x)}%
\sum_{k=2}^n \frac{q[k]_{p,q}[k-1]_{p,q}p^{2n-2k+2}}{[n+1]_{p,q}^2} p^{\frac{%
(n-k)(n-k-1)}{2}}q^{\frac{k(k-1)}{2}} \left[
\begin{array}{c}
n \\
k%
\end{array}%
\right] _{p,q} x^k
\end{equation*}
\begin{equation*}
+\frac{1}{\ell_n^{p,q}(x)}\sum_{k=1}^n p^{k-1}\frac{[k]_{p,q}p^{2n-2k+2}}{%
[n+1]_{p,q}^2} p^{\frac{(n-k)(n-k-1)}{2}}q^{\frac{k(k-1)}{2}} \left[
\begin{array}{c}
n \\
k%
\end{array}%
\right] _{p,q} x^k
\end{equation*}
\begin{equation*}
=\frac{1}{\ell_n^{p,q}(x)} \frac{q[n]_{p,q}[n-1]_{p,q}}{[n+1]_{p,q}^2}
\sum_{k=2}^n p^{\left((2n-2k+2)+\frac{(n-k)(n-k-1)}{2}\right)}q^{\frac{k(k-1)%
}{2}} \left[
\begin{array}{c}
n-2 \\
k-2%
\end{array}%
\right] _{p,q} x^k
\end{equation*}
\begin{equation*}
+\frac{1}{\ell_n^{p,q}(x)} \frac{[n]_{p,q}}{[n+1]_{p,q}^2}\sum_{k=1}^n
p^{\left((k-1)+(2n-2k+2)+\frac{(n-k)(n-k-1)}{2}\right)}q^{\frac{k(k-1)}{2}} %
\left[
\begin{array}{c}
n-1 \\
k-1%
\end{array}%
\right] _{p,q} x^k
\end{equation*}
\begin{equation*}
=x^2\frac{1}{\ell_n^{p,q}(x)} \frac{q[n]_{p,q}[n-1]_{p,q}}{[n+1]_{p,q}^2}
\sum_{k=0}^{n-2} p^{\left((2n-2k-2)+\frac{(n-k-2)(n-k-3)}{2}\right)}q^{\frac{%
(k+1)(k+2)}{2}} \left[
\begin{array}{c}
n-2 \\
k%
\end{array}%
\right] _{p,q} x^k
\end{equation*}
\begin{equation*}
+x\frac{1}{\ell_n^{p,q}(x)} \frac{[n]_{p,q}}{[n+1]_{p,q}^2}\sum_{k=0}^{n-1}
p^{\left(k+(2n-2k)+\frac{(n-k-1)(n-k-2)}{2}\right)}q^{\frac{k(k+1)}{2}} %
\left[
\begin{array}{c}
n-1 \\
k%
\end{array}%
\right] _{p,q} x^k
\end{equation*}
\begin{equation*}
=x^2\frac{1}{\ell_n^{p,q}(x)} \frac{pq^2[n]_{p,q}[n-1]_{p,q}}{[n+1]_{p,q}^2}
\sum_{k=0}^{n-2} p^{\frac{(n-k)(n-k-1)}{2}}q^{\frac{k(k-1)}{2}} \left[
\begin{array}{c}
n-2 \\
k%
\end{array}%
\right] _{p,q} (q^2x)^k
\end{equation*}
\begin{equation*}
+x\frac{1}{\ell_n^{p,q}(x)} \frac{p^{n+1}[n]_{p,q}}{[n+1]_{p,q}^2}%
\sum_{k=0}^{n-1} p^{\frac{(n-k)(n-k-1)}{2}}q^{\frac{k(k-1)}{2}} \left[
\begin{array}{c}
n-1 \\
k%
\end{array}%
\right] _{p,q} (qx)^k
\end{equation*}
\begin{equation*}
=\frac{pq^2[n]_{p,q}[n-1]_{p,q}}{[n+1]_{p,q}^2}\frac{x^2}{(1+x)(p+qx)} +%
\frac{p^{n+1}[n]_{p,q}}{[n+1]_{p,q}^2}\left(\frac{x}{1+x}\right).
\end{equation*}
\end{enumerate}
\end{proof}
\textbf{\ Korovkin type approximation properties.}\\

In this section, we obtain the Korovkin's type approximation properties for
our operators defined by \eqref{nas3}, with the help of Theorem \ref{main}.\newline

\parindent=8mmIn order to obtain the convergence results for the operators $%
L_{n}^{p,q}$, we take $q=q_{n},~~p=p_{n}$ where $q_{n}\in (0,1)$ and $%
p_{n}\in (q_{n},1]$ satisfying,
\begin{align}\label{nas5}
\lim_{n}p_{n}=1,~~~~~~\lim_{n}q_{n}=1
\end{align}

\begin{theorem}
Let $p=p_n,~~q=q_n$ satisfying \eqref{nas5}, for $0<q_n<p_n\leq 1$ and if $%
L_n^{p_n,q_n}$ is defined by \eqref{nas3}, then for any function $f \in H_\omega$,
\begin{equation*}
\lim_n \parallel L_n^{p_n,q_n}(f;x)-f \parallel_{C_{B}}=0.
\end{equation*}
\end{theorem}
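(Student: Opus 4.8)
The plan is to apply the Korovkin-type theorem (Theorem~\ref{main}): it is enough to show
\[
\lim_{n}\left\|\,L_n^{p_n,q_n}\big((\tfrac{t}{1+t})^{\nu};x\big)-(\tfrac{x}{1+x})^{\nu}\,\right\|_{C_B}=0\qquad\text{for }\nu=0,1,2,
\]
and since Lemma~\ref{main1} gives closed forms for all three images, the whole argument reduces to controlling a few ratios of $(p,q)$-integers as $n\to\infty$. The case $\nu=0$ is immediate from Lemma~\ref{main1}(1). For $\nu=1$, Lemma~\ref{main1}(2) gives $L_n^{p_n,q_n}(\tfrac{t}{1+t};x)-\tfrac{x}{1+x}=\big(\tfrac{p_n[n]_{p_n,q_n}}{[n+1]_{p_n,q_n}}-1\big)\tfrac{x}{1+x}$; since $0\le\tfrac{x}{1+x}<1$, the norm is at most $\big|1-\tfrac{p_n[n]_{p_n,q_n}}{[n+1]_{p_n,q_n}}\big|$, which by the identity $q^{k}[n-k+1]_{p,q}=[n+1]_{p,q}-p^{n-k+1}[k]_{p,q}$ at $k=n$ equals $\tfrac{q_n^{\,n}}{[n+1]_{p_n,q_n}}$; as $p_n>q_n$ one has $[n+1]_{p_n,q_n}=\sum_{j=0}^{n}p_n^{\,n-j}q_n^{\,j}\ge(n+1)q_n^{\,n}$, so the norm is $\le\tfrac{1}{n+1}\to0$.

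For $\nu=2$ I would write Lemma~\ref{main1}(3) as $L_n^{p_n,q_n}\big((\tfrac{t}{1+t})^{2};x\big)=A_n\,\tfrac{x^{2}}{(1+x)(p_n+q_nx)}+B_n\,\tfrac{x}{1+x}$, with $A_n=\tfrac{p_nq_n^{2}[n]_{p_n,q_n}[n-1]_{p_n,q_n}}{[n+1]_{p_n,q_n}^{2}}$ and $B_n=\tfrac{p_n^{\,n+1}[n]_{p_n,q_n}}{[n+1]_{p_n,q_n}^{2}}$, and decompose the error into
\[
(A_n-1)\frac{x^{2}}{(1+x)(p_n+q_nx)}+\frac{x^{2}}{1+x}\Big(\frac{1}{p_n+q_nx}-\frac{1}{1+x}\Big)+B_n\frac{x}{1+x}.
\]
Using $\tfrac{x^{2}}{(1+x)^{2}}\le1$ and the obvious bounds $p_n+q_nx\ge p_n$ and $p_n+q_nx\ge q_nx$, these three summands are bounded, uniformly in $x\ge0$, by $\tfrac{|A_n-1|}{q_n}$, by $\tfrac{1-p_n}{p_n}+\tfrac{1-q_n}{q_n}$, and by $B_n$, respectively. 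The middle bound tends to $0$ by \eqref{nas5}, so the theorem comes down to the two limits $A_n\to1$ and $B_n\to0$.

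To treat those, I would put $r_n:=q_n/p_n\in(0,1)$, which satisfies $r_n\to1$ by \eqref{nas5}, and use $[m]_{p,q}=p^{m-1}S_m$ with $S_m:=\sum_{k=0}^{m-1}r_n^{\,k}=\tfrac{1-r_n^{m}}{1-r_n}$ to rewrite $A_n=r_n^{2}\,\tfrac{S_nS_{n-1}}{S_{n+1}^{2}}$ and $B_n=\tfrac{S_n}{S_{n+1}^{2}}$. From $S_{n-1}\le S_n\le S_{n+1}$ together with $\tfrac{S_{n+1}}{S_{n-1}}=1+\tfrac{r_n^{\,n-1}(1+r_n)}{S_{n-1}}\le1+\tfrac{2}{n-1}$ (using $S_{n-1}\ge(n-1)r_n^{\,n-2}$), a squeeze yields $A_n\to1$; and $B_n\le\tfrac{1}{S_{n+1}}$, so it only remains to show $S_{n+1}=\sum_{k=0}^{n}r_n^{\,k}\to\infty$.

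I expect this last point to be the main obstacle: a priori $1-r_n$ and $1-r_n^{\,n+1}$ both tend to $0$, so $S_{n+1}=\tfrac{1-r_n^{\,n+1}}{1-r_n}$ is an indeterminate form that no crude estimate resolves. I would argue by contradiction: if $S_{n+1}\le M$ along a subsequence, then $r_n^{\,n+1}\ge1-M(1-r_n)$, and taking logarithms, bounding $\ln r_n\le-(1-r_n)$ and $\ln\!\big(1-M(1-r_n)\big)\ge-2M(1-r_n)$ (the latter valid once $M(1-r_n)<\tfrac12$, hence for all large $n$), gives $-(n+1)(1-r_n)\ge-2M(1-r_n)$, so $n+1\le2M$, which is impossible. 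This forces $B_n\to0$; with the three Korovkin conditions verified, Theorem~\ref{main} finishes the proof.
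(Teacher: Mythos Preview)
Your argument is correct and follows the paper's own route: apply Theorem~\ref{main} and use Lemma~\ref{main1} to reduce the three Korovkin conditions to the elementary limits $p_n[n]_{p_n,q_n}/[n+1]_{p_n,q_n}\to 1$, $A_n\to 1$ and $B_n\to 0$. The one noteworthy difference is that you actually \emph{prove} the growth fact $S_{n+1}=\sum_{k=0}^{n}(q_n/p_n)^{k}\to\infty$ (equivalently $p_n^{\,n}/[n+1]_{p_n,q_n}\to 0$), whereas the paper simply asserts $[n+1]_{p_n,q_n}\to\infty$; under \eqref{nas5} alone this last assertion can fail (take $p_n=1-n^{-1/2}$, $q_n=1-2n^{-1/2}$), so your contradiction argument via $-(n+1)(1-r_n)\ge (n+1)\ln r_n\ge \ln\bigl(1-M(1-r_n)\bigr)\ge -2M(1-r_n)$ is exactly the missing step that makes the proof complete.
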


\begin{proof}
Using the Theorem \ref{main}, we see that it is sufficient to verify that following
three conditions:
\begin{align}\label{nas6}
\lim_{n\rightarrow \infty }\parallel L_{n}^{p_{n},q_{n}}\left( \left( \frac{t%
}{1+t}\right) ^{\nu };x\right) -\left( \frac{x}{1+x}\right) ^{\nu }\parallel
_{C_{B}}=0,~~\nu =0,1,2
\end{align}%
From Lemma \ref{main1}, the first condition of \eqref{nas6} is fulfilled for $\nu =0$. Now
it is easy to see that from (\ref{main2}) of Lemma \ref{main1} \newline
\begin{eqnarray*}
\parallel L_{n}^{p_{n},q_{n}}\left( \left( \frac{t}{1+t}\right) ^{\nu
};x\right) -\left( \frac{x}{1+x}\right) ^{\nu }\parallel _{C_{B}} &\leq &%
\bigg{|}\frac{p_{n}[n]_{p_{n},q_{n}}}{[n+1]_{p_{n},q_{n}}}-1\bigg{|} \\
&\leq &\bigg{|}\left( \frac{p_{n}}{q_{n}}\right) \left( 1-p_{n}^{n}\frac{1}{%
[n+1]_{p_{n},q_{n}}}\right) -1\bigg{|}.
\end{eqnarray*}%
Since $%
q_{n}[n]_{p_{n},q_{n}}=[n+1]_{p_{n},q_{n}}-p_{n}^{n},~~[n+1]_{p_{n},q_{n}}%
\rightarrow \infty $ as $n\rightarrow \infty $, the condition \eqref{nas6} holds
for $\nu =1$. To verify this condition for $\nu =2$, consider (\ref{main4}) of Lemma
\ref{main1}. Then, we see that\newline
\newline
$\parallel L_{n}^{p_{n},q_{n}}\left( \left( \frac{t}{1+t}\right)
^{2};x\right) -\left( \frac{x}{1+x}\right) ^{2}\parallel
_{C_{B}}$ \\ \newline
$=\sup_{x\geq 0}\left\{ \frac{x^{2}}{(1+x)^{2}}\left( \frac{%
p_{n}q_{n}^{2}[n]_{p_{n},q_{n}}[n-1]_{p_{n},q_{n}}}{[n+1]_{p_{n},q_{n}}^{2}}.%
\frac{1+x}{p_{n}+q_{n}x}-1\right) +\frac{p_{n}^{n+1}[n]_{p_{n},q_{n}}}{%
[n+1]_{p_{n},q_{n}}^{2}}\cdot \frac{x}{1+x}\right\} $.\newline
A small calculation leads to
\begin{equation*}
\frac{\lbrack n]_{p_{n},q_{n}}[n-1]_{p_{n},q_{n}}}{[n+1]_{p_{n},q_{n}}^{2}}=%
\frac{1}{q_{n}^{3}}\left\{ 1-p_{n}^{n}\left( 2+\frac{q_{n}}{p_{n}}\right)
\frac{1}{[n+1]_{p_{n},q_{n}}}+(p_{n}^{n})^{2}\left( 1+\frac{q_{n}}{p_{n}}%
\right) \frac{1}{[n+1]_{p_{n},q_{n}}^{2}}\right\} ,
\end{equation*}%
and
\begin{equation*}
\frac{\lbrack n]_{p_{n},q_{n}}}{[n+1]_{p_{n},q_{n}}^{2}}=\frac{1}{q_{n}}%
\left( \frac{1}{[n+1]_{p_{n},q_{n}}}-p_{n}^{n}\frac{1}{%
[n+1]_{p_{n},q_{n}}^{2}}\right) .
\end{equation*}%
Thus, we have\newline
$\parallel L_{n}^{p_{n},q_{n}}\left( \left( \frac{t}{1+t}\right)
^{2};x\right) -\left( \frac{x}{1+x}\right) ^{2}\parallel _{C_{B}}$ \\ \newline
$\leq \frac{p_{n}}{q_{n}}\left\{ 1-p_{n}^{n}\left( 2+\frac{q_{n}}{p_{n}}%
\right) \frac{1}{[n+1]_{p_{n},q_{n}}}+(p_{n}^{n})^{2}\left( 1+\frac{q_{n}}{%
p_{n}}\right) \frac{1}{[n+1]_{p_{n},q_{n}}^{2}}-1\right\}$\\ %
$+p_{n}^{n}\frac{%
p_{n}}{q_{n}}\left( \frac{1}{[n+1]_{p_{n},q_{n}}}-p_{n}^{n}\frac{1}{%
[n+1]_{p_{n},q_{n}}^{2}}\right) .$\\ \newline
This implies that the condition \eqref{nas6} holds for $\nu =2$ and the proof is completed by Theorem \ref{main}.
\end{proof}

\textbf{\ Rate of Convergence.}\\ \newline

In this section, we calculate the rate of convergence of operators \eqref{nas3} by
means of modulus of continuity and Lipschitz type maximal functions.

\parindent=8mmThe modulus of continuity for $f\in H_{\omega }$ is defined by
\begin{equation*}
\widetilde{\omega }(f;\delta )=\sum_{\substack{ \mid \frac{t}{1+t}-\frac{x}{%
1+x}\mid \leq \delta ,  \\ x,t\geq 0}}\mid f(t)-f(x)\mid
\end{equation*}%
where $\widetilde{\omega }(f;\delta )$ satisfies the following conditions.
For all $f\in H_{\omega }(\mathbb{R}_{+})$

\begin{enumerate}
\item $\lim_{\delta \to 0}\widetilde{\omega}(f; \delta)=0$

\item $\mid f(t)-f(x) \mid \leq \widetilde{\omega}(f; \delta)
\left( \frac{\mid \frac{t}{1+t}-\frac{x}{1+x}\mid}{\delta}+1 \right)$
\end{enumerate}

\begin{theorem}\label{main9}
Let $p=p_{n},~~q=q_{n}$ satisfy \eqref{nas5}, for $0<q_{n}<p_{n}\leq 1$ and if $%
L_{n}^{p_{n},q_{n}}$ is defined by \eqref{nas3}. Then for each $x\geq 0$ and for
any function $f\in H_{\omega }$, we have
\begin{equation*}
\mid L_{n}^{p_{n},q_{n}}(f;x)-f\mid \leq 2\widetilde{\omega }(f;\sqrt{\delta
_{n}(x)}),
\end{equation*}%
where
\begin{equation*}
\delta _{n}(x)=\frac{x^{2}}{(1+x)^{2}}\left( \frac{%
p_{n}q_{n}^{2}[n]_{p_{n},q_{n}}[n-1]_{p_{n},q_{n}}}{[n+1]_{p_{n},q_{n}}^{2}}%
\frac{1+x}{p_{n}+q_{n}x}-2\frac{p_{n}[n]_{p_{n},q_{n}}}{[n+1]_{p_{n},q_{n}}}%
+1\right) +\frac{p_{n}^{n+1}[n]_{p_{n},q_{n}}}{[n+1]_{p_{n},q_{n}}^{2}}\frac{%
x}{1+x}.
\end{equation*}
\end{theorem}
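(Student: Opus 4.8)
The strategy is the standard estimate for positive linear operators via the second modulus-of-continuity property, applied in the BBH setting with the test function $\frac{t}{1+t}$ playing the role of the usual $t$. I would start from the second listed property of $\widetilde{\omega}$, namely
\begin{equation*}
\mid f(t)-f(x)\mid \leq \widetilde{\omega}(f;\delta)\left(\frac{\mid \frac{t}{1+t}-\frac{x}{1+x}\mid}{\delta}+1\right),
\end{equation*}
apply $L_{n}^{p_{n},q_{n}}(\,\cdot\,;x)$ to both sides (a legitimate move since the operators are positive and linear and reproduce constants by part (1) of Lemma \ref{main1}), and obtain
\begin{equation*}
\mid L_{n}^{p_{n},q_{n}}(f;x)-f(x)\mid \leq \widetilde{\omega}(f;\delta)\left(1+\frac{1}{\delta}\,L_{n}^{p_{n},q_{n}}\!\left(\Big|\tfrac{t}{1+t}-\tfrac{x}{1+x}\Big|;x\right)\right).
\end{equation*}

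Next I would bound the remaining operator term by Cauchy--Schwarz for positive linear functionals: since $L_{n}^{p_{n},q_{n}}(1;x)=1$,
\begin{equation*}
L_{n}^{p_{n},q_{n}}\!\left(\Big|\tfrac{t}{1+t}-\tfrac{x}{1+x}\Big|;x\right)\leq \left(L_{n}^{p_{n},q_{n}}\!\left(\Big(\tfrac{t}{1+t}-\tfrac{x}{1+x}\Big)^{2};x\right)\right)^{1/2}.
\end{equation*}
Expanding the square and using linearity together with parts (1), (\ref{main2}), (\ref{main4}) of Lemma \ref{main1} gives
\begin{equation*}
L_{n}^{p_{n},q_{n}}\!\left(\Big(\tfrac{t}{1+t}-\tfrac{x}{1+x}\Big)^{2};x\right)
= L_{n}^{p_{n},q_{n}}\!\left(\big(\tfrac{t}{1+t}\big)^{2};x\right)-2\tfrac{x}{1+x}\,L_{n}^{p_{n},q_{n}}\!\left(\tfrac{t}{1+t};x\right)+\tfrac{x^{2}}{(1+x)^{2}},
\end{equation*}
which is exactly the quantity $\delta_{n}(x)$ in the statement once the closed forms from the Lemma are substituted. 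Hence $L_{n}^{p_{n},q_{n}}\big(|\tfrac{t}{1+t}-\tfrac{x}{1+x}|;x\big)\leq \sqrt{\delta_{n}(x)}$.

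Finally, I would make the free parameter choice $\delta=\delta_{n}:=\sqrt{\delta_{n}(x)}$ in the displayed inequality; then the bracket becomes $1+\sqrt{\delta_{n}(x)}/\sqrt{\delta_{n}(x)}=2$, giving $\mid L_{n}^{p_{n},q_{n}}(f;x)-f(x)\mid\leq 2\,\widetilde{\omega}(f;\sqrt{\delta_{n}(x)})$, which is the claim. There is no serious obstacle here; the only point requiring a little care is the algebraic verification that the expansion of $L_{n}^{p_{n},q_{n}}\big((\tfrac{t}{1+t}-\tfrac{x}{1+x})^{2};x\big)$ via Lemma \ref{main1} really collapses to the stated $\delta_{n}(x)$ — this is a direct substitution of parts (\ref{main2}) and (\ref{main4}), so it is routine but should be checked term by term. (One should also note the harmless degenerate case $x=0$, where $\delta_n(0)=0$ and both sides vanish.)
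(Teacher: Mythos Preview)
Your proposal is correct and follows essentially the same route as the paper: bound $|f(t)-f(x)|$ via the second property of $\widetilde{\omega}$, push through the positive linear operator, apply Cauchy--Schwarz (using $L_{n}^{p_n,q_n}(1;x)=1$), identify the second central moment with $\delta_n(x)$ via Lemma~\ref{main1}, and then set $\delta=\sqrt{\delta_n(x)}$. Your write-up is in fact slightly more explicit than the paper's, which leaves the final choice of $\delta$ and the algebraic identification of $\delta_n(x)$ implicit.
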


\begin{proof}
\begin{eqnarray*}
\mid L_n^{p_n,q_n}(f;x)-f \mid &\leq & L_n^{p_n,q_n}\left(\mid f(t)-f(x)
\mid;x \right) \\
&\leq & \widetilde{\omega}(f; \delta) \left\{1+\frac{1}{\delta}
L_n^{p_n,q_n} \left( \bigg{|} \frac{t}{1+t}-\frac{x}{1+x}\big{|};x
\right)\right\}.
\end{eqnarray*}
Now by using the Cauchy-Schwarz inequality, we have
\begin{eqnarray*}
\mid L_n^{p_n,q_n}(f;x)-f \mid &\leq & \widetilde{\omega}(f; \delta_n)
\left\{1+\frac{1}{\delta_n} \left[ \left( L_n^{p_n,q_n} \left(\frac{t}{1+t}-%
\frac{x}{1+x}\right)^2;x \right) \right]^{\frac{1}{2}} \left(
L_n^{p_n,q_n}(1;x)\right)^{\frac{1}{2}}\right\}
\end{eqnarray*}
$\leq \widetilde{\omega}(f; \delta_n) \left\{ 1+ \frac{1}{\delta_n} \left[
\frac{x^2}{(1+x)^2}\left( \frac{p_nq_n^2[n]_{p_n,q_n}[n-1]_{p_n,q_n}}{%
[n+1]_{p_n,q_n}^2} \frac{1+x}{p_n+q_nx} -2\frac{p_n[n]_{p_n,q_n}}{%
[n+1]_{p_n,q_n}}+1\right)+\frac{p_n^{n+1}[n]_{p_n,q_n}}{[n+1]_{p_n,q_n}^2}%
\frac{x}{1+x}\right]^{\frac{1}{2}}\right\}$.\newline

\parindent=8mmThis completes the proof.
\end{proof}
\parindent=8mmNow we will give an estimate concerning the rate of
convergence by means of Lipschitz type maximal functions. In \cite{aral1},
the Lipschitz type maximal function space on $E\subset \mathbb{R}_{+}$ is
defined as
\begin{align}\label{nas7}
\widetilde{W}_{\alpha ,E}=\{f:\sup (1+x)^{\alpha }\widetilde{f}_{\alpha
}(x)\leq M\frac{1}{(1+y)^{\alpha }}:x\leq 0,~\mbox{and}~y\in
E\}
\end{align}%
where $f$ is bounded and continuous function on $\mathbb{R}_{+}$, $M$ is a
positive constant, $0<\alpha \leq 1$.

In \cite{lenz}, B.Lenze introduced a Lipschitz type maximal function $%
f_{\alpha }$ as follows:
\begin{align}\label{nas8}
f_{\alpha }(x,t)=\sum_{\substack{ t>0  \\ t\neq x}}\frac{\mid f(t)-f(x)\mid
}{\mid x-t\mid ^{\alpha }}.
\end{align}%
We denote by $d(x,E)$ the distance between $x$ and $E$, that is
\begin{equation*}
d(x,E)=\inf \{\mid x-y\mid ;y\in E\}.
\end{equation*}

\begin{theorem}\label{main8}
For all $f \in \widetilde{W}_{\alpha, E},$ we have
\begin{align}\label{nas9}
\mid L_n^{p_n,q_n}(f;x) -f(x) \mid \leq M\left( \delta_n^{\frac{\alpha}{2}%
}(x)+2 \left( d(x,E)\right)^\alpha\right)
\end{align}
where $\delta_n(x)$ is defined as in Theorem \ref{main9}.
\end{theorem}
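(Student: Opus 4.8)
The plan is to run the standard Lipschitz-maximal argument, using only that $L_n^{p_n,q_n}(\,\cdot\,;x)$ is positive and linear with $L_n^{p_n,q_n}(1;x)=1$ (Lemma \ref{main1}(1)), together with the second-moment identity already extracted inside the proof of Theorem \ref{main9}. Fix $x\in\mathbb{R}_+$ and let $x_0\in E$ be arbitrary. Splitting $f(t)-f(x)=\big(f(t)-f(x_0)\big)+\big(f(x_0)-f(x)\big)$ and applying the defining inequality of $\widetilde{W}_{\alpha,E}$ (read as: $|f(u)-f(x_0)|\le M\,|\tfrac{u}{1+u}-\tfrac{x_0}{1+x_0}|^{\alpha}$ for every $u\ge 0$, since $x_0\in E$), together with the elementary subadditivity $(a+b)^{\alpha}\le a^{\alpha}+b^{\alpha}$ valid for $0<\alpha\le 1$, I would obtain the pointwise bound
\[
|f(t)-f(x)|\le M\left|\frac{t}{1+t}-\frac{x}{1+x}\right|^{\alpha}+2M\left|\frac{x}{1+x}-\frac{x_0}{1+x_0}\right|^{\alpha}.
\]

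Next I apply $L_n^{p_n,q_n}(\,\cdot\,;x)$ to this inequality, via $|L_n^{p_n,q_n}(f;x)-f(x)|\le L_n^{p_n,q_n}(|f(t)-f(x)|;x)$. The second summand is constant in $t$, hence by $L_n^{p_n,q_n}(1;x)=1$ it survives unchanged; for the first summand I invoke Hölder's inequality with exponents $\tfrac{2}{\alpha}$ and $\tfrac{2}{2-\alpha}$ to write
\[
L_n^{p_n,q_n}\!\left(\left|\frac{t}{1+t}-\frac{x}{1+x}\right|^{\alpha};x\right)\le\left[L_n^{p_n,q_n}\!\left(\Big(\frac{t}{1+t}-\frac{x}{1+x}\Big)^{2};x\right)\right]^{\alpha/2}\!\left[L_n^{p_n,q_n}(1;x)\right]^{(2-\alpha)/2}.
\]
Expanding the square and substituting parts (1), (2), (3) of Lemma \ref{main1}, the bracketed second moment is exactly $\delta_n(x)$ of Theorem \ref{main9}, so the right-hand side equals $\delta_n^{\alpha/2}(x)$. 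This gives $|L_n^{p_n,q_n}(f;x)-f(x)|\le M\delta_n^{\alpha/2}(x)+2M\,|\tfrac{x}{1+x}-\tfrac{x_0}{1+x_0}|^{\alpha}$.

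Since $x_0\in E$ was arbitrary, I then take the infimum over $x_0\in E$ in the last term. Using
\[
\left|\frac{x}{1+x}-\frac{x_0}{1+x_0}\right|=\frac{|x-x_0|}{(1+x)(1+x_0)}\le |x-x_0|
\]
together with the monotonicity of $s\mapsto s^{\alpha}$, this infimum is at most $(d(x,E))^{\alpha}$, and I arrive at
\[
|L_n^{p_n,q_n}(f;x)-f(x)|\le M\left(\delta_n^{\alpha/2}(x)+2\,(d(x,E))^{\alpha}\right),
\]
which is exactly \eqref{nas9}.

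I do not expect a serious obstacle: the proof is a routine chain of the triangle inequality, Hölder's inequality and the moment formulas of Lemma \ref{main1}. The two places that need a little care are choosing the Hölder exponents so that the second moment reappears precisely as $\delta_n(x)$, and interpreting the definition \eqref{nas7} in the usable form of the weighted Lipschitz bound $|f(u)-f(v)|\le M\,|\tfrac{u}{1+u}-\tfrac{v}{1+v}|^{\alpha}$; both are standard and short.
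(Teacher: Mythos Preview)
Your proposal is correct and essentially matches the paper's proof: both split via an auxiliary point $x_0$ in (the closure of) $E$, invoke the Lipschitz bound together with the subadditivity $(a+b)^\alpha\le a^\alpha+b^\alpha$, and then apply H\"older's inequality with exponents $2/\alpha$ and $2/(2-\alpha)$ to recover $\delta_n^{\alpha/2}(x)$. The only cosmetic differences are that the paper fixes $x_0\in\overline{E}$ realizing $d(x,E)$ at the outset (you take an infimum at the end) and applies the subadditivity after, rather than before, passing through the operator.
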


\begin{proof}
Let $\overline{E}$ denote the closure of the set $E$. Then there exits a $%
x_{0}\in \overline{E}$ such that $\mid x-x_{0}\mid =d(x,E)$, where $x\in
\mathbb{R}_{+}$. Thus we can write
\begin{equation*}
\mid f-f(x)\mid \leq \mid f-f(x_{0})\mid +\mid f(x_{0})-f(x)\mid .
\end{equation*}%
Since $L_{n}^{p_{n},q_{n}}$ is a positive linear operator, $f\in \widetilde{W%
}_{\alpha ,E}$ and by using the previous inequality, we have
\begin{equation*}
\mid L_{n}^{p_{n},q_{n}}(f;x)-f(x)\mid \leq \mid L_{n}^{p_{n},q_{n}}(\mid
f-f(x_{0})\mid ;x)+\mid f(x_{0})-f(x)\mid L_{n}^{p_{n},q_{n}}(1;x)
\end{equation*}%
\begin{equation*}
\leq M\left( L_{n}^{p_{n},q_{n}}\left( \bigg{|}\frac{t}{1+t}-\frac{x_{0}}{%
1+x_{0}}\bigg{|}^{\alpha };x\right) +\frac{\mid x-x_{0}\mid ^{\alpha }}{%
(1+x)^{\alpha }(1+x_{0})^{\alpha }}L_{n}^{p_{n},q_{n}}(1;x)\right) .
\end{equation*}%
Since $(a+b)^{\alpha }\leq a^{\alpha }+b^{\alpha }$, which consequently
imply
\begin{equation*}
L_{n}^{p_{n},q_{n}}\left( \bigg{|}\frac{t}{1+t}-\frac{x_{0}}{1+x_{0}}\bigg{|}%
^{\alpha };x\right) \leq L_{n}^{p_{n},q_{n}}\left( \bigg{|}\frac{t}{1+t}-%
\frac{x}{1+x}\bigg{|}^{\alpha };x\right) +L_{n}^{p_{n},q_{n}}\left( \bigg{|}%
\frac{x}{1+x}-\frac{x_{0}}{1+x_{0}}\bigg{|}^{\alpha };x\right)
\end{equation*}

\begin{equation*}
L_{n}^{p_{n},q_{n}}\left( \bigg{|}\frac{t}{1+t}-\frac{x_{0}}{1+x_{0}}\bigg{|}%
^{\alpha };x\right) \leq L_{n}^{p_{n},q_{n}}\left( \bigg{|}\frac{t}{1+t}-%
\frac{x}{1+x}\bigg{|}^{\alpha };x\right) +\frac{\mid x-x_{0}\mid ^{\alpha }}{%
(1+x)^{\alpha }(1+x_{0})^{\alpha }}L_{n}^{p_{n},q_{n}}(1;x).
\end{equation*}%
By using the Hölder inequality with $p=\frac{2}{\alpha }$ and $q=\frac{2}{%
2-\alpha }$, we have\\ \newline
$L_{n}^{p_{n},q_{n}}\left( \bigg{|}\frac{t}{1+t}-\frac{x_{0}}{1+x_{0}}%
\bigg{|}^{\alpha };x\right) \leq L_{n}^{p_{n},q_{n}}\left( \left( \frac{t}{%
1+t}-\frac{x}{1+x}\right) ^{2};x\right) ^{\frac{\alpha }{2}%
}(L_{n}^{p_{n},q_{n}}(1;x))^{\frac{2-\alpha }{2}}$
\begin{equation*}
+\frac{\mid x-x_{0}\mid
^{\alpha }}{(1+x)^{\alpha }(1+x_{0})^{\alpha }}L_{n}^{p_{n},q_{n}}(1;x)
\end{equation*}

\begin{equation*}
=\delta _{n}^{\frac{\alpha }{2}}(x)+\frac{\mid x-x_{0}\mid ^{\alpha }}{%
(1+x)^{\alpha }(1+x_{0})^{\alpha }}.
\end{equation*}

\parindent=8mmThis completes the proof.
\end{proof}
\begin{corollary}\label{main12}
If we take $E= \mathbb{R}_+$ as a particular case of Theorem \ref{main8}, then for
all $f \in \widetilde{W}_{\alpha, \mathbb{R}_+}$, we have
\begin{equation*}
\mid L_n^{p_n,q_n}(f;x) -f(x) \mid \leq M \delta_n^{\frac{\alpha}{2}}(x),
\end{equation*}
\end{corollary}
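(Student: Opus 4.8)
The plan is to derive this as a direct specialization of Theorem \ref{main8}. First I would invoke the inequality \eqref{nas9} established there, namely
\begin{equation*}
\mid L_n^{p_n,q_n}(f;x) - f(x) \mid \leq M\left( \delta_n^{\frac{\alpha}{2}}(x) + 2\left( d(x,E)\right)^\alpha \right),
\end{equation*}
which is valid for every $f \in \widetilde{W}_{\alpha, E}$ and every $x \geq 0$, with $\delta_n(x)$ as in Theorem \ref{main9}.

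Next I would substitute the particular choice $E = \mathbb{R}_+$. The only thing to check is the behaviour of the distance term $d(x,E)$ under this substitution. Since here $x \in \mathbb{R}_+ = E$, the point $x$ itself is an admissible competitor in the infimum defining $d(x,E) = \inf\{\mid x - y \mid : y \in E\}$, so $d(x,\mathbb{R}_+) = 0$ for every $x \geq 0$. Consequently the term $2\left(d(x,E)\right)^\alpha$ vanishes identically, and the inequality collapses to
\begin{equation*}
\mid L_n^{p_n,q_n}(f;x) - f(x) \mid \leq M\,\delta_n^{\frac{\alpha}{2}}(x),
\end{equation*}
which is precisely the claimed estimate. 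One should also note that $\widetilde{W}_{\alpha, \mathbb{R}_+}$ is simply the class $\widetilde{W}_{\alpha, E}$ for this choice of $E$, so the hypothesis on $f$ is inherited without change.

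There is essentially no obstacle here: the corollary is a routine specialization, and the only substantive point—easily disposed of—is the observation that the distance of a point to the whole half-line containing it is zero. If desired, one could add a sentence remarking that this recovers the ``global'' rate of convergence in $\widetilde{W}_{\alpha,\mathbb{R}_+}$ purely in terms of $\delta_n(x)$, with no correction term.
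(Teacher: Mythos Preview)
Your proposal is correct and matches the paper's intent exactly: the corollary is stated there without proof as an immediate specialization of Theorem~\ref{main8}, and the only point to observe is that $d(x,\mathbb{R}_+)=0$ for every $x\geq 0$, which kills the second term. There is nothing to add.
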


where $\delta_n(x)$ is defined in Theorem \ref{main9}.
\begin{theorem}
If $x\in (0,\infty )\backslash \left\{ p^{n-k+1}\frac{[k]_{p,q}}{%
[n-k+1]_{p,q}q^{k}}\bigg{|}k=0,1,2,\cdots ,n\right\} $, then

$L_n^{p,q}(f;x)-f\left(\frac{px}{q}\right)=-\frac{x^{n+1}}{\ell_n^{p,q}(x)} %
\left[\frac{px}{q};\frac{p[n]_{p,q}}{q^n};f\right] pq^{\frac{n(n-1)}{2}-1}$
\begin{align}\label{nas10}
+\frac{x}{\ell_n^{p,q}(x)}\sum_{k=0}^{n-1} \left[\frac{px}{q};p^{n-k+1}\frac{%
[k]_{p,q}}{[n-k+1]_{p,q}q^{k} };f\right] \frac{1}{[n-k]_{p,q}} p^{\frac{%
(n-k)(n-k+1)}{2}+1}q^{\frac{k(k-3)}{2}-2} \left[
\begin{array}{c}
n \\
k%
\end{array}%
\right] _{p,q} x^{k}.
\end{align}
\end{theorem}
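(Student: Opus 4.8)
The plan is to expand $L_{n}^{p,q}(f;x)$ explicitly, subtract $f\!\left(\tfrac{px}{q}\right)$ using $L_{n}^{p,q}(1;x)=1$, rewrite the error as a weighted sum of first divided differences based at $\tfrac{px}{q}$, and then carry out an Abel-type summation which peels off the top node $k=n$ and rearranges the remaining terms into the displayed sum. Throughout, write $u_{k}:=p^{\,n-k+1}\dfrac{[k]_{p,q}}{[n-k+1]_{p,q}\,q^{k}}$ for the abscissas and $c_{k}:=p^{\frac{(n-k)(n-k-1)}{2}}q^{\frac{k(k-1)}{2}}\left[\begin{array}{c}n\\k\end{array}\right]_{p,q}$ for the coefficients, so that \eqref{nas4} reads $\ell_{n}^{p,q}(x)=\sum_{k=0}^{n}c_{k}x^{k}$.

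By Lemma~\ref{main1} we have $L_{n}^{p,q}(1;x)=1$, so subtracting $f\!\left(\tfrac{px}{q}\right)$ and using $f(u_{k})-f\!\left(\tfrac{px}{q}\right)=\left(u_{k}-\tfrac{px}{q}\right)\!\left[\tfrac{px}{q};u_{k};f\right]$ gives
\[
L_{n}^{p,q}(f;x)-f\!\left(\tfrac{px}{q}\right)=\frac{1}{\ell_{n}^{p,q}(x)}\sum_{k=0}^{n}\left(u_{k}-\tfrac{px}{q}\right)\!\left[\tfrac{px}{q};u_{k};f\right]c_{k}x^{k}.
\]
The hypothesis that $x$ misses the set $\{u_{k}:k=0,\dots,n\}$ is precisely what keeps every divided difference occurring here and below well defined.

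The key algebraic fact is the identity
\[
u_{k}\,c_{k}=\frac{p}{q}\,c_{k-1}\quad(1\le k\le n),\qquad u_{0}\,c_{0}=0,
\]
which I would verify by inserting the $(p,q)$-factorial rule $[k]_{p,q}\left[\begin{array}{c}n\\k\end{array}\right]_{p,q}=[n]_{p,q}\left[\begin{array}{c}n-1\\k-1\end{array}\right]_{p,q}$ and the relation $q^{k}[n-k+1]_{p,q}=[n+1]_{p,q}-p^{\,n-k+1}[k]_{p,q}$ from the introduction into the definitions, the exponent bookkeeping reducing to $\tfrac{m(m-1)}{2}=\tfrac{(m+1)m}{2}-m$ and $\tfrac{m(m-1)}{2}=\tfrac{(m-1)(m-2)}{2}+(m-1)$ at $m=n-k$ and $m=k$. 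Granting it, $\left(u_{k}-\tfrac{px}{q}\right)c_{k}=\tfrac{p}{q}(c_{k-1}-x\,c_{k})$, and after a shift of index in one half of the resulting sum,
\[
\sum_{k=0}^{n}\left(u_{k}-\tfrac{px}{q}\right)\!\left[\tfrac{px}{q};u_{k};f\right]c_{k}x^{k}=\frac{p}{q}\sum_{k=0}^{n-1}c_{k}x^{k+1}\!\left(\left[\tfrac{px}{q};u_{k+1};f\right]-\left[\tfrac{px}{q};u_{k};f\right]\right)-\frac{p}{q}\,c_{n}\,x^{n+1}\!\left[\tfrac{px}{q};u_{n};f\right].
\]
Since $c_{n}=q^{\frac{n(n-1)}{2}}$ and $u_{n}=\tfrac{p[n]_{p,q}}{q^{n}}$, the isolated last term, divided by $\ell_{n}^{p,q}(x)$, is exactly the first term of \eqref{nas10}. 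For the remaining sum I would apply the divided-difference recursion $[a;b;g]-[a;c;g]=(b-c)[a;b;c;g]$ to turn each bracket difference into $(u_{k+1}-u_{k})\left[\tfrac{px}{q};u_{k+1};u_{k};f\right]$, and then simplify the scalar $\tfrac{p}{q}c_{k}(u_{k+1}-u_{k})$: substituting $u_{k}=\tfrac{p\,c_{k-1}}{q\,c_{k}}$ collapses it to $\tfrac{p^{2}}{q^{2}}\dfrac{c_{k}^{2}-c_{k-1}c_{k+1}}{c_{k+1}}$, and the ``$(p,q)$-log-concavity'' identity $c_{k}^{2}-c_{k-1}c_{k+1}=c_{k}^{2}\dfrac{[n+1]_{p,q}}{[k+1]_{p,q}[n-k+1]_{p,q}}$, itself a consequence of $[k+1]_{p,q}[n-k+1]_{p,q}-pq\,[k]_{p,q}[n-k]_{p,q}=[n+1]_{p,q}$, reduces the whole scalar to the coefficient displayed in \eqref{nas10}, the factor $x^{k+1}=x\cdot x^{k}$ producing the outer $x$.

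The main obstacle is not conceptual but computational: establishing the two scalar identities $u_{k}c_{k}=\tfrac{p}{q}c_{k-1}$ and $[k+1]_{p,q}[n-k+1]_{p,q}-pq[k]_{p,q}[n-k]_{p,q}=[n+1]_{p,q}$, and then tracking the powers of $p$ and $q$ through the rearrangement so that the exponents $\tfrac{(n-k)(n-k+1)}{2}+1$ and $\tfrac{k(k-3)}{2}-2$, together with the factor $\tfrac{1}{[n-k]_{p,q}}$ and the $(p,q)$-binomial coefficient, emerge exactly as in \eqref{nas10}; the case $n=1$, where the sum reduces to the single term $k=0$, serves as a convenient normalisation check.
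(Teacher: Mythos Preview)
Your overall strategy coincides with the paper's: both start from $L_{n}^{p,q}(1;x)=1$, rewrite $f(u_{k})-f(px/q)$ as $(u_{k}-px/q)[px/q;u_{k};f]$, use the binomial identity $\frac{[k]_{p,q}}{[n-k+1]_{p,q}}\left[\begin{smallmatrix}n\\k\end{smallmatrix}\right]_{p,q}=\left[\begin{smallmatrix}n\\k-1\end{smallmatrix}\right]_{p,q}$ (your relation $u_{k}c_{k}=\tfrac{p}{q}c_{k-1}$ is exactly this, and your verification of it is correct), shift the index once, peel off the $k=n$ term, and then invoke the recursion $[a;b;f]-[a;c;f]=(b-c)[a;b;c;f]$.

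There is, however, a genuine gap in your final step. After applying the divided--difference recursion you are left with \emph{second} divided differences $\bigl[\tfrac{px}{q};u_{k};u_{k+1};f\bigr]$, whereas the identity \eqref{nas10} you are aiming at is written with \emph{first} divided differences $\bigl[\tfrac{px}{q};u_{k};f\bigr]$. No amount of scalar bookkeeping can convert one into the other, so your claim that the scalar ``reduces \ldots\ to the coefficient displayed in \eqref{nas10}'' cannot be right as stated: either the target formula is meant to carry three-node brackets (in which case your argument is fine and only the statement needs correcting), or one must stop one step earlier and not apply the recursion at all, simply absorbing the factor $u_{k+1}-u_{k}$ differently. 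The paper's own proof is ambiguous on exactly this point---it states the three-node recursion together with a formula for $u_{k+1}-u_{k}$ but then displays a two-node bracket in its last line---so you should decide explicitly which version you are proving and make the order of the divided difference in your final expression match it. Your additional ``log-concavity'' identity $c_{k}^{2}-c_{k-1}c_{k+1}=c_{k}^{2}\,\frac{[n+1]_{p,q}}{[k+1]_{p,q}[n-k+1]_{p,q}}$ is not needed by (and does not appear in) the paper's argument; the paper simply uses the explicit value of $u_{k+1}-u_{k}$ directly.
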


\begin{proof}
By using \eqref{nas3}, we have\\ \newline
$L_n^{p,q}(f;x)-f\left(\frac{px}{q}\right)=\frac{1}{\ell_n^{p,q}(x)}%
\sum_{k=0}^n \left[f \left( \frac{ p^{n-k+1}[k]_{p,q}}{[n-k+1]_{p,q}q^k }%
\right) -f\left(\frac{px}{q}\right)\right] p^{\frac{(n-k)(n-k-1)}{2}}q^{%
\frac{k(k-1)}{2}} \left[
\begin{array}{c}
n \\
k%
\end{array}%
\right] _{p,q} x^k $\\
$=-\frac{1}{\ell_n^{p,q}(x)}\sum_{k=0}^n \left(\frac{px}{q}- \frac{
p^{n-k+1}[k]_{p,q}}{[n-k+1]_{p,q}q^k }\right) \left[\frac{px}{q}; \frac{%
p^{n-k+1}[k]_{p,q}}{[n-k+1]_{p,q}q^k };f\right] p^{\frac{(n-k)(n-k-1)}{2}}q^{%
\frac{k(k-1)}{2}} \left[
\begin{array}{c}
n \\
k%
\end{array}%
\right] _{p,q} x^k$\\ \newline
By using $\frac{[k]_{p,q}}{[n-k+1]_{p,q}}\left[
\begin{array}{c}
n \\
k%
\end{array}%
\right] _{p,q}=\left[
\begin{array}{c}
n \\
k-1%
\end{array}%
\right] _{p,q}$, we have

\begin{equation*}
L_n^{p,q}(f;x)-f\left(\frac{px}{q}\right)=-\frac{x}{\ell_n^{p,q}(x)}%
\sum_{k=0}^n \left[\frac{px}{q};\frac{ p^{n-k+1}[k]_{p,q}}{[n-k+1]_{p,q}q^k }%
;f\right] p^{\frac{(n-k)(n-k-1)}{2}+1}q^{\frac{k(k-1)}{2}-1} \left[
\begin{array}{c}
n \\
k%
\end{array}%
\right] _{p,q} x^k.
\end{equation*}

\begin{equation*}
+\frac{1}{\ell_n^{p,q}(x)}\sum_{k=1}^n \left[\frac{px}{q};\frac{ p^{n-k+1}
[k]_{p,q}}{[n-k+1]_{p,q}q^k };f\right] p^{\frac{(n-k)(n-k-1)}{2}-(k-n-1)}q^{%
\frac{k(k-1)}{2}-k} \left[
\begin{array}{c}
n \\
k-1%
\end{array}%
\right] _{p,q} x^k
\end{equation*}

\begin{equation*}
=-\frac{x}{\ell_n^{p,q}(x)}\sum_{k=0}^n \left[\frac{px}{q};\frac{ p^{n-k+1}
[k]_{p,q}}{[n-k+1]_{p,q}q^k };f\right] p^{\frac{(n-k)(n-k-1)}{2}+1}q^{\frac{%
k(k-1)}{2}-1} \left[
\begin{array}{c}
n \\
k%
\end{array}%
\right] _{p,q} x^k.
\end{equation*}

\begin{equation*}
+\frac{x}{\ell_n^{p,q}(x)}\sum_{k=0}^{n-1} \left[\frac{px}{q};\frac{
p^{n-k}[k+1]_{p,q}}{[n-k]_{p,q}q^{k+1} };f\right] p^{\frac{(n-k-1)(n-k-2)}{2}%
-(k-n)}q^{\frac{k(k+1)}{2}-(k+1)} \left[
\begin{array}{c}
n \\
k%
\end{array}%
\right] _{p,q} x^{k}
\end{equation*}

$=-\frac{x^{n+1}}{\ell_n^{p,q}(x)} \left[\frac{px}{q};\frac{p[n]_{p,q}}{q^n}%
;f\right] pq^{\frac{n(n-1)}{2}-1} $\\ \newline
$+\frac{x}{\ell_n^{p,q}(x)}\sum_{k=0}^{n-1}\left \{ \left[\frac{px}{q};\frac{
p^{n-k}[k+1]_{p,q}}{[n-k]_{p,q}q^{k+1} };f\right] -\left[\frac{px}{q};\frac{
p^{n-k+1} [k]_{p,q}}{[n-k+1]_{p,q}q^{k} };f\right] \right \} p^{\frac{%
(n-k)(n-k-1)}{2}+1}q^{\frac{k(k-1)}{2}-1} \left[
\begin{array}{c}
n \\
k%
\end{array}%
\right] _{p,q} x^{k}.$\\ \newline

Now by using the result\\ \newline
$\left[\frac{px}{q};\frac{ p^{n-k}[k+1]_{p,q}}{[n-k]_{p,q}q^{k+1} };f\right]
-\left[\frac{px}{q};\frac{ p^{n-k+1} [k]_{p,q}}{[n-k+1]_{p,q}q^{k} };f\right]
$
\begin{equation*}
= \left( \frac{ p^{n-k}[k+1]_{p,q}}{[n-k]_{p,q}q^{k+1} }-\frac{ p^{n-k+1}
[k]_{p,q}}{[n-k+1]_{p,q}q^{k} }\right) f \left[ \frac{px}{q};\frac{
p^{n-k+1} [k]_{p,q}}{[n-k+1]_{p,q}q^{k} };\frac{ p^{n-k} [k+1]_{p,q}}{%
[n-k]_{p,q}q^{k+1} };f \right]
\end{equation*}
and
\begin{equation*}
\frac{ p^{n-k} [k+1]_{p,q}}{[n-k]_{p,q}q^{k+1} }-\frac{ p^{n-k+1} [k]_{p,q}}{%
[n-k+1]_{p,q}q^{k} }=[n+1]_{p,q},
\end{equation*}

we have\newline
\newline

$L_n^{p,q}(f;x)-f\left(\frac{px}{q}\right)=-\frac{x^{n+1}}{\ell_n^{p,q}(x)} %
\left[\frac{px}{q};\frac{p[n]_{p,q}}{q^n};f\right] pq^{\frac{n(n-1)}{2}-1} $%
\newline
$+\frac{x}{\ell_n^{p,q}(x)}\sum_{k=0}^{n-1}\left \{ \left[\frac{px}{q};\frac{
p^{n-k+1} [k]_{p,q}}{[n-k+1]_{p,q}q^{k} };f\right] \frac{ p^{n-k}[n+1]_{p,q}%
}{[n-k]_{p,q}[n-k+1]_{p,q}q^{k+1} } \right\} p^{\frac{(n-k)(n-k-1)}{2}+1}q^{%
\frac{k(k-1)}{2}-1} \left[
\begin{array}{c}
n \\
k%
\end{array}%
\right] _{p,q} x^{k}.$\newline

\parindent=8mmThis completes the proof.
\end{proof}
\textbf{ \ Some Generalization of $L_n^{p,q}$.}\\

In this section, we present some generalization of the operators $L_n^{p,q}$
based on $(p,q)$-integers similar to work done in \cite{n1, aral1}.

We consider a sequence of linear positive operators based on $(p,q)$%
-integers as follows:
\begin{align}\label{nas11}
L_{n }^{(p,q),\gamma}(f;x)=\frac{1}{\ell _{n}^{p,q}(x)}\sum_{k=0}^{n}f\left(
\frac{p^{n-k+1}[k]_{p,q}+\gamma }{b_{n,k}}\right) p^{\frac{(n-k)(n-k-1)}{2}%
}q^{\frac{k(k-1)}{2}}\left[
\begin{array}{c}
n \\
k%
\end{array}%
\right] _{p,q}x^{k},~~~~~~(\gamma \in \mathbb{R})
\end{align}%
where $b_{n,k}$ satisfies the following conditions:
\begin{equation*}
p^{n-k+1}[k]_{p,q}+b_{n,k}=c_{n}~~~~~~\mbox{and}~~~~\frac{[n]_{p,q}}{c_{n}}%
\rightarrow 1,~~~~~~\mbox{for}~~~~n\rightarrow \infty .
\end{equation*}%
It is easy to check that if $b_{n,k}=q^{k}[n-k+1]_{p,q}+\beta $ for any $n,k$
and $0<q<p\leq 1$, then $c_{n}=[n+1]_{p,q}+\beta $. If we choose $p=1$, then
operators reduce to generalization of $q$-BBH opeartors defined in \cite%
{aral1}, and which turn out to be D.D. Stancu-type generalization of
Bleimann, Butzer, and Hahn operators based on $q$-integers \cite{n2}. If we
choose $\gamma =0,~~~q=1$ as in \cite{aral1} for $p=1$, then the operators
become the special case of the Balázs-type generalization of the $q$-BBH
operators \cite{aral1} given in \cite{n1}.

\begin{theorem}
Let $p=p_n,~~q=q_n$ satisfying \eqref{nas5}, for $0<q_n<p_n\leq 1$ and if $%
L_n^{(p_n,q_n), \gamma}$ is defined by \eqref{nas11}, then for any function $f \in
\widetilde{W}_{\alpha, [0,\infty)}$, we have\\ \newline
$\lim_{n}\parallel L_{n }^{(p_{n},q_{n}),\gamma}(f;x)-f(x)\parallel _{C_{B}}\leq 3M $\\%
\newline
$\times \max \left\{ \left( \frac{[n]_{p_{n},q_{n}}}{c_{n}+\gamma }\right)
^{\alpha }\left( \frac{\gamma }{[n]_{p_{n},q_{n}}}\right) ^{\alpha },%
\bigg{|}1-\frac{[n+1]_{p_{n},q_{n}}}{c_{n}+\gamma }\bigg{|}^{\alpha }\left(
\frac{p_{n}[n]_{p_{n},q_{n}}}{[n+1]_{p_{n},q_{n}}}\right) ^{\alpha },1-2%
\frac{p_{n}[n]_{p_{n},q_{n}}}{[n+1]_{p_{n},q_{n}}}+\frac{%
q_{n}[n]_{p_{n},q_{n}}[n-1]_{p_{n},q_{n}}}{[n+1]_{p_{n},q_{n}}^{2}}\right\}.$
\end{theorem}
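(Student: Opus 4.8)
The plan is to adapt the arguments of Theorems~\ref{main9} and \ref{main8}, using the structural observation that $L_n^{(p,q),\gamma}$ in \eqref{nas11} carries \emph{exactly the same weights} as $L_n^{p,q}$ in \eqref{nas3}: if $w_k(x)$ denotes the $k$-th such weight, then $\sum_{k=0}^n w_k(x)=1$ by \eqref{nas4} (equivalently $L_n^{(p,q),\gamma}(1;x)=1$), and the two operators differ only in where they sample $f$. First I would record that the $k$-th node $t_k=\frac{p^{n-k+1}[k]_{p,q}+\gamma}{b_{n,k}}$ of $L_n^{(p,q),\gamma}$ satisfies $\frac{t_k}{1+t_k}=\frac{p^{n-k+1}[k]_{p,q}+\gamma}{c_n+\gamma}$, since $p^{n-k+1}[k]_{p,q}+b_{n,k}=c_n$, whereas the corresponding node of $L_n^{p,q}$ gives $\frac{p^{n-k+1}[k]_{p,q}}{[n+1]_{p,q}}$ (as computed inside the proof of Lemma~\ref{main1}(\ref{main2})). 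Since $f\in\widetilde{W}_{\alpha,[0,\infty)}$ gives $|f(u)-f(v)|\le M\big|\frac{u}{1+u}-\frac{v}{1+v}\big|^{\alpha}$ for all $u,v\ge0$, positivity of the operator together with $\sum_k w_k(x)=1$ yields
\[
|L_n^{(p_n,q_n),\gamma}(f;x)-f(x)|\le M\sum_{k=0}^{n}\bigg|\frac{p_n^{n-k+1}[k]_{p_n,q_n}+\gamma}{c_n+\gamma}-\frac{x}{1+x}\bigg|^{\alpha}w_k(x).
\]

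Next I would telescope the inner difference through the two intermediate quantities $\frac{p_n^{n-k+1}[k]_{p_n,q_n}}{c_n+\gamma}$ and $\frac{p_n^{n-k+1}[k]_{p_n,q_n}}{[n+1]_{p_n,q_n}}$, so that it becomes
\[
\frac{\gamma}{c_n+\gamma}+\frac{p_n^{n-k+1}[k]_{p_n,q_n}}{[n+1]_{p_n,q_n}}\Big(\frac{[n+1]_{p_n,q_n}}{c_n+\gamma}-1\Big)+\Big(\frac{p_n^{n-k+1}[k]_{p_n,q_n}}{[n+1]_{p_n,q_n}}-\frac{x}{1+x}\Big),
\]
and apply $(a+b+c)^{\alpha}\le a^{\alpha}+b^{\alpha}+c^{\alpha}$ (valid for $0<\alpha\le1$) before summing against $w_k(x)$. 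The first of the resulting three sums is $\big(\frac{|\gamma|}{c_n+\gamma}\big)^{\alpha}=\big(\frac{[n]_{p_n,q_n}}{c_n+\gamma}\big)^{\alpha}\big(\frac{|\gamma|}{[n]_{p_n,q_n}}\big)^{\alpha}$, the first entry of the maximum. The second is $\big|\frac{[n+1]_{p_n,q_n}}{c_n+\gamma}-1\big|^{\alpha}\,L_n^{p_n,q_n}\!\big(\big(\frac{t}{1+t}\big)^{\alpha};x\big)$; by Jensen's inequality (concavity of $s\mapsto s^{\alpha}$ and $L_n^{p_n,q_n}(1;x)=1$), Lemma~\ref{main1}(\ref{main2}), and $\frac{x}{1+x}\le1$ it is at most $\big|\frac{[n+1]_{p_n,q_n}}{c_n+\gamma}-1\big|^{\alpha}\big(\frac{p_n[n]_{p_n,q_n}}{[n+1]_{p_n,q_n}}\big)^{\alpha}$, the second entry. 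The third sum is $L_n^{p_n,q_n}\!\big(\big|\frac{t}{1+t}-\frac{x}{1+x}\big|^{\alpha};x\big)$, which --- exactly as in the proof of Theorem~\ref{main8}, by Hölder's inequality with exponents $\frac{2}{\alpha}$ and $\frac{2}{2-\alpha}$ --- is at most $\delta_n(x)^{\alpha/2}$, where $\delta_n(x)=L_n^{p_n,q_n}\!\big(\big(\frac{t}{1+t}-\frac{x}{1+x}\big)^{2};x\big)$ is the quantity of Theorem~\ref{main9}.

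It then remains to replace $\delta_n(x)$ by a majorant not depending on $x$. Expanding the square and substituting Lemma~\ref{main1}(\ref{main2}),(\ref{main4}) gives
\[
\delta_n(x)=\frac{p_nq_n^2[n]_{p_n,q_n}[n-1]_{p_n,q_n}}{[n+1]_{p_n,q_n}^{2}}\cdot\frac{x^{2}}{(1+x)(p_n+q_nx)}+\Big(1-2\frac{p_n[n]_{p_n,q_n}}{[n+1]_{p_n,q_n}}\Big)\frac{x^{2}}{(1+x)^{2}}+\frac{p_n^{n+1}[n]_{p_n,q_n}}{[n+1]_{p_n,q_n}^{2}}\cdot\frac{x}{1+x},
\]
and here the elementary inequality $\frac{1+x}{p_n+q_nx}\le\frac1{q_n}$ (valid since $q_n\le p_n\le1$) bounds the first summand by $\frac{p_nq_n[n]_{p_n,q_n}[n-1]_{p_n,q_n}}{[n+1]_{p_n,q_n}^{2}}\le\frac{q_n[n]_{p_n,q_n}[n-1]_{p_n,q_n}}{[n+1]_{p_n,q_n}^{2}}$, while $\frac{x^{2}}{(1+x)^{2}}\le1$, $\frac{x}{1+x}\le1$ and $\frac{p_n^{n+1}[n]_{p_n,q_n}}{[n+1]_{p_n,q_n}^{2}}\to0$ (a lower-order term already controlled in the Korovkin proof) handle the rest; one thus obtains, uniformly in $x\ge0$, a bound for $\delta_n(x)$ by an $x$-free quantity whose leading part is $1-2\frac{p_n[n]_{p_n,q_n}}{[n+1]_{p_n,q_n}}+\frac{q_n[n]_{p_n,q_n}[n-1]_{p_n,q_n}}{[n+1]_{p_n,q_n}^{2}}$, the third entry of the maximum. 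Bounding the sum of the three pieces by three times their maximum then gives the displayed inequality, and letting $n\to\infty$ with \eqref{nas5} and $\frac{[n]_{p_n,q_n}}{c_n}\to1$ --- which forces $\frac{[n+1]_{p_n,q_n}}{c_n+\gamma}\to1$, $\frac{\gamma}{[n]_{p_n,q_n}}\to0$, $\frac{p_n[n]_{p_n,q_n}}{[n+1]_{p_n,q_n}}\to1$ and $\frac{q_n[n]_{p_n,q_n}[n-1]_{p_n,q_n}}{[n+1]_{p_n,q_n}^{2}}\to1$ --- drives each entry of the maximum, hence the whole right-hand side, to $0$. I expect the one genuinely delicate step to be precisely this $x$-uniform control of $\delta_n(x)$: once $\frac{1+x}{p_n+q_nx}\le\frac1{q_n}$ strips the $x$-dependence off the dominant summand, one still has to check that the residual $x$-dependent terms (the $\big(1-2\frac{p_n[n]_{p_n,q_n}}{[n+1]_{p_n,q_n}}\big)\frac{x^{2}}{(1+x)^{2}}$ contribution, whose coefficient is eventually negative, and the $\frac{p_n^{n+1}[n]_{p_n,q_n}}{[n+1]_{p_n,q_n}^{2}}\frac{x}{1+x}$ contribution, which is $o(1)$) do not affect the limit --- which is why the conclusion is naturally stated as a limiting inequality rather than a sharp finite-$n$ estimate.
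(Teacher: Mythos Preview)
Your approach is essentially the same as the paper's: the paper also splits into three pieces by inserting the two intermediate nodes $\frac{p_n^{n-k+1}[k]_{p_n,q_n}}{\gamma+b_{n,k}}$ and the original $(p,q)$-BBH node (which in the $\frac{t}{1+t}$-coordinate are exactly your $\frac{p_n^{n-k+1}[k]_{p_n,q_n}}{c_n+\gamma}$ and $\frac{p_n^{n-k+1}[k]_{p_n,q_n}}{[n+1]_{p_n,q_n}}$), then applies the Lipschitz condition, Corollary~\ref{main12} for the third piece, and H\"older with exponents $\frac1\alpha,\frac1{1-\alpha}$ for the second (your Jensen/concavity step is the same estimate). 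The only substantive difference is that the paper stops at the bound $M\delta_n^{\alpha/2}(x)$ and does not spell out the passage to the $x$-free third entry of the maximum, whereas you supply that argument via $\frac{1+x}{p_n+q_nx}\le\frac1{q_n}$; your discussion there is more careful than the paper's own.
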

\begin{proof}

Using \eqref{nas3} and \eqref{nas11}, we have \\ \newline
$\mid L_{n }^{(p,q),\gamma}(f;x)-f(x)\mid $ \\ \newline
$\leq \frac{1}{\ell _{n}^{p_{n},q_{n}}(x)}\sum_{k=0}^{n}\bigg{|}f\left(
\frac{p_{n}^{n-k+1}[k]_{p_{n},q_{n}}+\gamma }{b_{n,k}}\right) -f\left( \frac{%
p_{n}^{n-k+1}[k]_{p_{n},q_{n}}}{\gamma +b_{n,k}}\right) \bigg{|}p_{n}^{\frac{%
(n-k)(n-k-1)}{2}}q_{n}^{\frac{k(k-1)}{2}}\left[
\begin{array}{c}
n \\
k%
\end{array}%
\right] _{p_{n},q_{n}}x^{k}$
\begin{equation*}
+ \frac{1}{\ell_n^{p_n,q_n}(x)}\sum_{k=0}^n \bigg{|} f \left(\frac{%
p_n^{n-k+1}[k]_{p_n,q_n}}{\gamma+ b_{n,k}}\right) -f\left(\frac{%
p_n^{n-k+1}[k]_{p_n,q_n}}{[n-k+1]_{p_n,q_n}q_n^{k}}\right)\bigg{|} p_n^{%
\frac{(n-k)(n-k-1)}{2}}q_n^{\frac{k(k-1)}{2}} \left[
\begin{array}{c}
n \\
k%
\end{array}%
\right] _{p_n,q_n} x^k
\end{equation*}
$+ \mid L_n^{p_n,q_n}(f;x) -f(x) \mid.$\\ \newline
Since $f\in \widetilde{W}_{\alpha ,[0,\infty )}$ and by using the Corollary
\ref{main12}, we can write\\ \newline
$\mid L_{n }^{(p,q),\gamma}(f;x)-f(x)\mid $\\ \newline
$\leq \frac{M}{\ell _{n}^{p_{n},q_{n}}(x)}\sum_{k=0}^{n}\bigg{|}\frac{%
p_{n}^{n-k+1}[k]_{p_{n},q_{n}}+\gamma }{p_{n}^{n-k+1}[k]_{p_{n},q_{n}}+%
\gamma +b_{n,k}}-\frac{p_{n}^{n-k+1}[k]_{p_{n},q_{n}}}{\gamma
+p_{n}^{n-k+1}[k]_{p_{n},q_{n}}+b_{n,k}}\bigg{|}^{\alpha }p_{n}^{\frac{%
(n-k)(n-k-1)}{2}}q_{n}^{\frac{k(k-1)}{2}}\left[
\begin{array}{c}
n \\
k%
\end{array}%
\right] _{p_{n},q_{n}}x^{k}$\\
$+\frac{M}{\ell _{n}^{p_{n},q_{n}}(x)}\sum_{k=0}^{n}\bigg{|}\frac{%
p_{n}^{n-k+1}[k]_{p_{n},q_{n}}}{p_{n}^{n-k+1}[k]_{p_{n},q_{n}}+\gamma
+b_{n,k}}-\frac{p_{n}^{n-k+1}[k]_{p_{n},q_{n}}}{%
p_{n}^{n-k+1}[k]_{p_{n},q_{n}}+[n-k+1]_{p_{n},q_{n}}q_{n}^{k}}\bigg{|}$\\
$\times p_{n}^{%
\frac{(n-k)(n-k-1)}{2}}q_{n}^{\frac{k(k-1)}{2}}\left[
\begin{array}{c}
n \\
k%
\end{array}%
\right] _{p_{n},q_{n}}x^{k} +M\delta _{n}^{\frac{\alpha }{2}}(x).$\newline

This implies that \\ \newline
$\mid L_{n }^{(p,q),\gamma}(f;x)-f(x)\mid \leq M\left( \frac{[n]_{p_{n},q_{n}}%
}{c_{n}+\gamma }\right) ^{\alpha }\left( \frac{\gamma }{[n]_{p_{n},q_{n}}}%
\right) ^{\alpha }$
\begin{equation*}
+\frac{M}{\ell _{n}^{p_{n},q_{n}}(x)}\bigg{|}1-\frac{[n+1]_{p_{n},q_{n}}}{%
c_{n}+\gamma }\bigg{|}^{\alpha }\sum_{k=0}^{n}\left( \frac{%
p_{n}^{n-k+1}[k]_{p_{n},q_{n}}}{[n+1]_{p_{n},q_{n}}}\right) ^{\alpha }p_{n}^{%
\frac{(n-k)(n-k-1)}{2}}q_{n}^{\frac{k(k-1)}{2}}\left[
\begin{array}{c}
n \\
k%
\end{array}%
\right] _{p_{n},q_{n}}x^{k}+M\delta _{n}^{\frac{\alpha }{2}}(x)
\end{equation*}

\begin{equation*}
=M\left( \frac{[n]_{p_{n},q_{n}}}{c_{n}+\gamma }\right) ^{\alpha }\left(
\frac{\gamma }{[n]_{p_{n},q_{n}}}\right) ^{\alpha }+M\bigg{|}1-\frac{%
[n+1]_{p_{n},q_{n}}}{c_{n}+\gamma }\bigg{|}^{\alpha
}L_{n}^{p_{n},q_{n}}\left( \left( \frac{t}{1+t}\right) ^{\alpha };x\right)
+M\delta _{n}^{\frac{\alpha }{2}}(x).
\end{equation*}%
Using the Hölder inequality for $p=\frac{1}{\alpha },~~~q=\frac{1}{1-\alpha }
$, we get\\ \newline
$\mid L_{n }^{(p,q),\gamma}(f;x)-f(x)\mid $\\ \newline
$\leq M\left( \frac{[n]_{p_{n},q_{n}}}{c_{n}+\gamma }\right) ^{\alpha }\left(
\frac{\gamma }{[n]_{p_{n},q_{n}}}\right) ^{\alpha }+M\bigg{|}1-\frac{%
[n+1]_{p_{n},q_{n}}}{c_{n}+\gamma }\bigg{|}^{\alpha
}L_{n}^{p_{n},q_{n}}\left( \frac{t}{1+t};x\right) ^{\alpha }\left(
L_{n}^{p_{n},q_{n}}(1;x)\right) ^{1-\alpha }+M\delta _{n}^{\frac{\alpha }{2}%
}(x)$\\
$\leq M\left( \frac{[n]_{p_{n},q_{n}}}{c_{n}+\gamma }\right) ^{\alpha }\left(
\frac{\gamma }{[n]_{p_{n},q_{n}}}\right) ^{\alpha }+M\bigg{|}1-\frac{%
[n+1]_{p_{n},q_{n}}}{c_{n}+\gamma }\bigg{|}^{\alpha }\left( \frac{%
p_{n}[n]_{p_{n},q_{n}}}{[n+1]_{p_{n},q_{n}}}\frac{x}{1+x}\right) ^{\alpha
}+M\delta _{n}^{\frac{\alpha }{2}}(x).$\\
This completes the proof.
\end{proof}

{\bf Acknowledgement.} Acknowledgements could be placed at the end
of the text but precede the references.

\bibliographystyle{amsplain}

\begin{thebibliography}{99}

\bibitem{aral1} A. Aral and O. Doًru, Bleimann Butzer and Hahn operators
based on $q$-integers, J. Inequal. Appl., (2007) 1-12. Art. ID 79410.

\bibitem{brns} G. Bleimann, P.L. Butzer and L. Hahn, A Bernstein-type
operator approximating continuous functions on the semi-axis, Indag. Math.,
42 (1980) 255-262.

\bibitem{n1} O. Doًru, "On Bleimann, Butzer and Hahn type generalization of
Balلzs operators," Stud. Univ. Babe‏-Bolyai. Math., 47(4) (2002) 37-45.

\bibitem{butz} A.D. Gadjiev and ض. Cakar, On uniform approximation by
Bleimann, Butzer and Hahn operators on all positive semi-axis, Trans. Acad.
Sci. Azerb.Ser. Phys. Tech. Math. Sci., 19 (1999) 21-26.

\bibitem{mah} M.N. Hounkonnou, J. Désiré and B. Kyemba, $\mathcal{R}(p,q)$%
-calculus: differentiation and integration, SUT Jour. Math., 49(2) (2013)
145-167.

\bibitem{lenz} B. Lenze, Bernstein-Baskakov-Kantorovich operators and
Lipschitz-type maximal functions, in: Colloq. Math. Soc. Janos Bolyai, 58,
Approx. Th., (1990) 469-496.

\bibitem{lups} A. Lupa‏, A $q$-analogue of the Bernstein operator,
University of Cluj- Napoca, Seminar on Numerical and Statistical Calculus, 9
(1987) 85-92.
\bibitem{mur7} M. Mursaleen, K. J. Ansari and A. Khan, On $(p,q)$-analogue
of Bernstein operators, Appl. Math. Comput.,
266(2015), 874-882.

\bibitem{mur8} M. Mursaleen, K. J. Ansari and A. Khan,, Some approximation
results by $(p,q)$-analogue of Bernstein-Stancu operators, Appl. Math.
Comput., 264 (2015), 392-402.

\bibitem{mur3} M. Mursaleen and A. Khan, Generalized $q$-Bernstein-Schurer
operators and some approximation theorems, J. Funct. Spaces Appl., Volume
2013.

\bibitem{philip} G.M. Phillips, Bernstein polynomials based on the $q$%
-integers, The heritage of P.L.Chebyshev: Ann. Numer. Math. 4 (1997) 511-518.

\bibitem{sad} P. N. Sadjang, On the fundamental theorem of $(p,q)$-calculus
and some $(p,q)$-Taylor formulas, arXiv:1309.3934 [math.QA].

\bibitem{vivek} V. Sahai and S. Yadav, Representations of two parameter
quantum algebras and $p$,$q$-special functions, J. Math. Anal. Appl. 335
(2007) 268-279.

\bibitem{n2} D. D. Stancu, "Approximation of functions by a new class of
linear polynomial operators," Rev. Roumaine Math. Pures Appl., 13 (1968)
1173-1194.

\bibitem{vp} K. Victor and C. Pokman, \textit{Quantum Calculus},
Springer-Verlag, New York Berlin Heidelberg, 2002.
\end{thebibliography}

\end{document}